\documentclass[11pt,a4paper]{article}
\usepackage{amssymb,amsmath,amsthm,amsfonts}
\usepackage[top=2.5cm, bottom=3cm]{geometry}
\usepackage{stmaryrd}

\usepackage{graphicx}
\usepackage{caption}
\usepackage[labelfont=it,textfont=normalfont]{subcaption}

\usepackage{color}
\usepackage{hyperref}
\hypersetup{
    bookmarks=true,
    unicode=false,
    pdftoolbar=true,
    pdfmenubar=true,
    pdffitwindow=false,
    pdfstartview={FitH},
    pdfnewwindow=true,
    pagebackref=false,
    colorlinks=true,
    linkcolor=black,
    citecolor=red,
    filecolor=magenta,
    urlcolor=black
}

\usepackage{algpseudocode}
\usepackage[section]{algorithm}
\usepackage{tikz}

\theoremstyle{plain}
\newtheorem{theorem}{Theorem}[section]
\newtheorem{lemma}{Lemma}[section]
\newtheorem{proposition}{Proposition}[section]
\newtheorem{observation}{Observation}[section]

\theoremstyle{definition}
\newtheorem{definition}{Definition}[section]
\newtheorem{example}{Example}[section]
\newtheorem{code}{Algorithm}[section]

\theoremstyle{remark}

\newtheorem{remark}{Remark}[section]
\newtheorem{corollary}{Corollary}[section]

\numberwithin{equation}{section}

\def\N{\mathbb{N}}

\def\R{\mathbb{R}}
\def\C{\mathbb{C}}

\def\K{\mathbb{K}}
\def\I{\mathbb{I}}

\def\D{\mathcal{D}}
\def\O{\mathcal{L}}

\def\e{\varepsilon}

\def\Re{\mathrm{Re}}
\def\Im{\mathrm{Im}}
\def\dim{\mathrm{dim}}

\newcommand\norm[1]{\interleave#1\interleave}
\newcommand\x[1]{\mathsf{#1}}
\newcommand\ex[1]{\langle #1 \rangle}
\newcommand\co[2][]{\,\rangle\hspace{-1pt}#2\hspace{-1pt}\langle_{#1}\,}

\newcommand{\interval}[1]{\llbracket #1 \rrbracket}

\def\Item$#1${\item $\displaystyle#1$   \hfill\refstepcounter{equation}(\theequation)}

\renewcommand{\theenumi}{\arabic{enumi})}
\renewcommand{\labelenumi}{\textit{\theenumi}}

\definecolor{jszary}{gray}{0.8}

\title{\bf Strict localization of eigenvectors and eigenvalues}

\author{
\L{}ukasz Struski\thanks{Corresponding author}\; and Jacek Tabor\\[0.3em]
\small Jagiellonian University,\\[-0.8ex]
\small Faculty of Mathematics and Computer Science,\\[-0.8ex]
\small \L{}ojasiewicza 6, 30-348 Krak\'ow, Poland\\[-0.5ex]
\small e-mail: \href{mailto:struski@ii.uj.edu.pl}{\sf struski@ii.uj.edu.pl}, \href{mailto:tabor@ii.uj.edu.pl}{\sf tabor@ii.uj.edu.pl}
}

\date{}

\begin{document}
 \maketitle

\begin{abstract}
In this article we show and implement a simple and efficient method to strictly locate eigenvectors and eigenvalues of a given matrix, based on the modified cone condition. As a consequence we can also effectively localize zeros of complex polynomials.

\bigskip

\noindent\small {\bf Key words and phrases:} eigenvectors, eigenvalues, cone condition, spectrum,  interval arithmetic, zeros of polynomials.\\[0.2em]
\small {\bf 2010 Mathematics Subject Classification:} 65F15. 
\end{abstract}

\medskip
           
        %
        %
        %
\section{Introduction}
 
 \subsection{Motivation}
 
Determination of the eigenvalues and eigenvectors of a matrix is important in many areas of science, for example in  web ranking \cite{google1,google2}, computer graphics and visualization  \cite{twarz}, quantum mechanics \cite{qm}, statistics  \cite{pca1, pca}, medicine, communications, construction vibration analysis \cite{capd, vibration}. 

One of the most important numerical methods designed to calculate the eigenvalues and eigenvectors of matrix $A$ is the power method \cite{Com, jacobi}. It is used to determine a maximum module eigenvalue of $A$ and the corresponding eigenvector $\x{v}$. The limit of the product $\frac{A^n\x{w}}{\|A^n\x{w}\|}$, where $\x{w}$ is a randomly chosen element, is the vector corresponding to the largest eigenvalue. The eigenvalue can be calculated from the Rayleigh quotient $\lambda = \frac{\langle A\x{v}, \x{v}\rangle}{\langle\x{v},\x{v}\rangle}$.
The most common method of solving the full eigenvalue problem, i.e.\ finding all the eigenvalues and corresponding eigenvectors, is the QR method \cite{QQR, QR}. 

There are methods for locating eigenvalues such as Gerschgorin theorem \cite{G} from 1931. This theorem allows to strictly locate the position of the eigenvalues of the matrix with real or complex coefficients. However, it does not allow to localize the eigenvectors.
 
With the growing importance of these concepts there is a need to look for new methods of localizing simultaneously eigenvalues and eigenvectors. We do not know strict and efficient methods for locating eigenvectors of real or complex matrices, so we want to fill this gap. Our aim is to create and analyze a new method of strict location eigenvectors and eigenvalues with the use of interval arithmetic\footnote{For language C++ one can use libraries such as `boost` \cite{boost} or `CAPD` \cite{capd}.}. 
Using the fact that the polynomial $W(\x{x})=\x{x}^n+a_{n-1}\x{x}^{n-1} + \ldots +a_1\x{x}+a_0$ of the $n$-th degree  equals the determinant of the matrix
\[
\begin{bmatrix}
0 & 1 & 0 & \ldots & 0           \\[0.3em]
0 & 0 & 1 & \ddots & \vdots          \\[0.3em]
\vdots & \vdots & \ddots & 1 & 0  \\[0.3em]
0 & 0 & \ldots & 0 & 1  \\[0.3em]
-a_0 & -a_1 & -a_2 & \ldots & -a_{n-1}
\end{bmatrix},
\]
we show that we can effectively localize zeros of real or complex polynomials (for non-strict methods see \cite{R1,R2}). 

The content of this paper can be described as follows. In the following short subsection we present an algorithm which  is the final outcome of our theory. In the Section 2 we introduce notation and present the simple properties of our theory, which  is based the concept of cone condition. The notion of cone condition originally appeared in the late 60's in the works of Alekseev, Anosov, Moser and Sinai \cite{Alek}. These techniques are used in the examination of differential equations \cite{CZ, Hs, KT, Nh, Z}. In Section 3 we show the basic properties of operations. In the following section we present the main result of this article Corollary \ref{x_1}, which allow as a consequence obtain strictly localize the position of the eigenvector corresponding to the given eigenvalue in a small cone. In the last section we present a simple algorithm and some numerical applications of the theory on a few basic examples. In particular for a random matrix of size $5\times 5$ with randomly generated values from the set $[-1,1]$ we obtain the eigenvectors and eigenvalues with typical accuracy $\e\approx 10^{-11}$ (see Example \ref{ex})\footnote{In our program we use variables of type double. One could decrease the error by using numbers of arbitrary precision.}.

\subsection{Basic Algorithm}

In this subsection we present a basic idea how our method can be used (more detailed explanation is given in last section). This algorithm determines the accuracy for numerical approximation of the eigenvectors and the corresponding eigenvalues. Now we introduce the following notation for clarity and simplicity of the presentation.

A matrix $A$ of dimensions $N\times N$ with elements $a_{ij}\in\K$ is denoted by 
\[
A:=[a_{ij}]_{1\leq i,j\leq N}\in M_N(\K).
\] 
For $K,J\subset\{1,\ldots,N\}$ by $A_{[K,J]}:=[a_{kj}]_{k \in K, j \in J}$ we denote the sub-matrix with rows $k\in K$ and columns $j\in J$. We put $m:n=\{m,\ldots,n\}$ for $m,n\in\{1,\ldots,N\}$. For $k\in\{1,\ldots,N\}$ by $\neq\!k$ we understand the set
\(
\{1,\ldots, N\}\setminus\{k\}.
\)
 The identity matrix is denoted by $I$.

By $\interval{\x{x}}$ we understand interval representation of $\x{x}\in\R$ that is an interval with representable ends such that $\x{x}\in\interval{\x{x}}$. For complex number $\x{z}\in\C$ we put $\interval{\x{z}}:=\interval{\Re(\x{z})}+\interval{\Im(\x{z})}i$, and for a matrix $A=[a_{ij}]_{1\leq i,j\leq N}$, $\interval{A}$ denotes the matrix $[\interval{a_{ij}}]_{1\leq i,j\leq N}$. We refer the reader to \cite{Moore} for more information concerning interval arithmetic.

As a sup norm $\|A\|_{\infty}$ we take the maximum sum of absolute values of the elements in rows of matrix $A$, that is 
\[
\|A\| _{\infty}= \|[a_{ij}]\|_{\infty} := \max\limits_{1\leq i\leq N}\sum_{j=1}^{N}\|a_{ij}\|_{\infty}.
\]

We are given a matrix $A=[a_{ij}]_{1\leq i,j\leq N}\in M_N(\C)$ with pairwise disjoint single eigenvalues, $\lambda_i\neq\lambda_j$ for $i\neq j$ and $i,j\in\{1, \dots, N\}$. We assume that we have calculated the numerical approximation of eigenvectors $\x{\tilde{x}_1}, \ldots, \x{\tilde{x}_N}$ and eigenvalues\footnote{To calculate the approximation of eigenvectors and eigenvalues one can use any  numerical method.} $\tilde{\lambda}_1,\ldots,\tilde{\lambda}_N$ of the matrix $A$.
To locate the exact position of $k$-th eigenvector $\x{x_k}$ of $A$ one can use the following algorithm.

\begin{code}\label{code}
\begin{enumerate}
\setlength{\itemsep}{3pt}
  \setlength{\parskip}{0pt}
  \setlength{\parsep}{0pt}
  \item Put  $P:=[\x{\tilde{x}_1}, \ldots, \x{\tilde{x}_N}]$,\label{p:1}
  \item calculate interval hull $\interval{P^{-1}}$ of inverse of $P$ (we call it the inverse interval matrix of $P$)\footnote{By $\interval{P^{-1}}$ we denote such an interval matrix that $Q^{-1} \in \interval{P^{-1}}$ for every $Q \in \interval{P}$.},\label{p:2}
  \item calculate the interval matrix $J=\interval{P^{-1}}\cdot\interval{A}\cdot \interval{P}$,\label{p:3}
  \item seek a constant $\e>0$ as small as possible so that the matrix \label{p:4}
\end{enumerate}
\begin{equation}\label{B^e}
B^{\e} = 
\begin{bmatrix}
 \frac{1}{\e} & 0 & \ldots  & 0 &  &  &  &  \\
0 & \ddots & \ddots  & \vdots &  & 0 &  & \\
\vdots & \ddots & \frac{1}{\e} & 0 &  &  &  & \\
0 & \ldots & 0 & 1 & 0 & \ldots & 0\\
&  &  &   0 & \frac{1}{\e} & \ddots & \vdots\\
& 0 &  & \vdots &\ddots & \ddots & 0 \\
&  &  & 0 & \ldots & 0 & \frac{1}{\e} 
\end{bmatrix}
\cdot ( J -\tilde{\lambda}_k\cdot I)\cdot
\begin{bmatrix}
 \e & 0 & \ldots  & 0 &  &  &  &  \\
0 & \ddots & \ddots  & \vdots &  & 0 &  & \\
\vdots & \ddots & \e & 0 &  &  &  & \\
0 & \ldots & 0 & 1 & 0 & \ldots & 0\\
&  &  &   0 & \e & \ddots & \vdots\\
& 0 &  & \vdots &\ddots & \ddots & 0 \\
&  &  & 0 & \ldots & 0 & \e
\end{bmatrix},
\end{equation}
where the number $1$ is at the intersection of the k-th row and k-th column, satisfies the following condition
  \begin{align*}
    \|B^{\e}_{[k,1:N]}\|_{\infty} & < \|(B^{\e}_{[\neq k,\neq k]})^{-1}\|_{\infty}^{-1} - \|B^{\e}_{[\neq k,k]}\|_{\infty}.
  \end{align*}
Then the $k$-th eigenvector of $A$ satisfies
\[
\x{x_k}\in \interval{\x{\tilde{x}_k}} +\interval{-\e,\e}\cdot \sum_{i\neq k}^{N} \interval{\Re(\x{\tilde{x}_i})}+\interval{-\e,\e}\cdot \sum_{i\neq k}^{N} \interval{\Im(\x{\tilde{x}_i})}i.
\]
\end{code}
\noindent We estimate the eigenvalue $\lambda_k$ from the Rayleigh coefficient
\[
\lambda_k \in \frac{\langle A\x{x_k}, \x{x_k}\rangle}{\langle\x{x_k},\x{x_k}\rangle}.
\]
We show more precise details in the last section of this paper.

\begin{remark}
Our method can be modified to work with multiple single eigenvalues (see Proposition \ref{dim}). However, it does not work with multiple eigenvalues of the form \eqref{bl} but this is a typical problem of most algorithms calculating the eigenvalues and eigenvectors.
\begin{equation}\label{bl}
\begin{bmatrix}
\lambda & 1  & 0  & \cdots & 0  \\
0  & \lambda  & \ddots & \ddots & \vdots \\
\vdots  & \ddots & \ddots & 1 & 0  \\
\vdots  & \;  & \ddots & \lambda    &   1\\
0  & \cdots  & \cdots & 0 & \lambda   \\
\end{bmatrix},
\end{equation}
\end{remark}

        %
        %
        %
\section{Cones and Dominating Maps}
 
In this section we modify the concept of cones from \cite{KT}, which will allow us to locate the  eigenvectors of the matrix.

\begin{definition}
Let $E$ be a finite-dimensional Banach space with semi-norms $\co[]{\cdot}$ (we call it {\em contracting}), $\ex{\cdot}$ (which we call {\em expanding}) such that
\[
\norm{\x{x}} := \max(\co[]{\x{x}}, \ex{\x{x}})
 \] 
 defines an equivalent norm on $E$.
 By the {\em r-norm} for $r>0$ on  the cone-space $E$ we take
\[
   \norm{\x{x}}_r:=\max(\co[]{\x{x}}, r\cdot\ex{\x{x}}).
\]
\end{definition}

\begin{definition}\label{D1}
Let $E$ be a cone-space. We define the {\em $r$-contracting cone} in $E$ by
\begin{align*}
   \co[r]{E} & :=\{\x{x} \in E: \;\co[]{\x{x}} \geq r\ex{\x{x}}\},\\
   \intertext{and the {\em $r$-expanding cone} in $E$ by}
   \ex{E}_r & :=\{\x{x} \in E:\; \co[]{\x{x}} \leq r\ex{\x{x}} \}.
\end{align*}
\end{definition}
\noindent Note that 
\begin{equation}\label{E}
E= \co[r]{E}\cup\ex{E}_r.
\end{equation}
In the same way we define $r$-contracting cone and $r$-expanding cone in subspace $E$.
If $r=1$ we will omit the subscript $r$, in particular we speak of contracting cone.
We introduce the scaling by $r$ of semi-norms to have better control over size of the cones (see Figure \ref{rys}), and which consequently will allow us to better locate the eigenvectors.
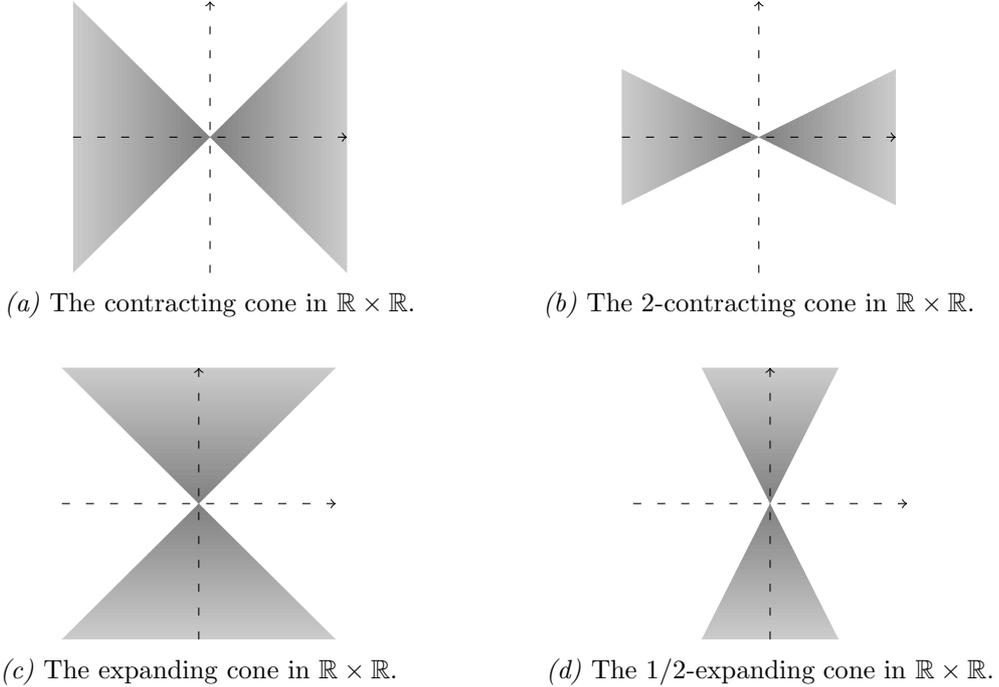
\begin{figure}[h!]
  \centering
        \begin{subfigure}[h!]{0.43\textwidth}
                \centering
		\begin{tikzpicture}[scale=1.2]
		\shade[left color=jszary, right color= gray] (-1.5,1.5) -- (0,0) -- (-1.5,-1.5) -- cycle;
		  \shade[right color=jszary, left color= gray] (1.5,1.5) -- (0,0) -- (1.5,-1.5) -- cycle;
		\draw[loosely dashed, ->] (-1.5,0) -- (1.5,0) coordinate (x axis);
		\draw[loosely dashed, ->] (0,-1.5) -- (0,1.5) coordinate (y axis);
		 \end{tikzpicture}
                \caption{The contracting cone in $\R\times\R$.}
        \end{subfigure}
        \qquad
        \begin{subfigure}[h!]{0.43\textwidth}
                \centering
                \begin{tikzpicture}[scale=1.2]
   		\shade[left color=jszary, right color= gray] (-1.5,0.75) -- (0,0) -- (-1.5,-0.75) -- cycle;
		  \shade[right color=jszary, left color= gray] (1.5,0.75) -- (0,0) -- (1.5,-0.75) -- cycle;
		\draw[loosely dashed, ->] (-1.5,0) -- (1.5,0) coordinate (x axis);
		\draw[loosely dashed, ->] (0,-1.5) -- (0,1.5) coordinate (y axis);
		 \end{tikzpicture}
                \caption{The $2$-contracting cone in $\R\times\R$.}
        \end{subfigure}
        \\[1.5em]
         \begin{subfigure}[h!]{0.45\textwidth}
                \centering
                \begin{tikzpicture}[scale=1.2]
   		\shade[top color= jszary, bottom color=gray] (-1.5,1.5) -- (0,0) -- (1.5,1.5) -- cycle;
		  \shade[top color=gray, bottom color=jszary] (-1.5,-1.5) -- (0,0) -- (1.5,-1.5) -- cycle;
		\draw[loosely dashed, ->] (-1.5,0) -- (1.5,0) coordinate (x axis);
		\draw[loosely dashed, ->] (0,-1.5) -- (0,1.5) coordinate (y axis);
		 \end{tikzpicture}
                \caption{ The expanding cone in $\R\times\R$.}
        \end{subfigure}
        \qquad
        \begin{subfigure}[h!]{0.45\textwidth}
                \centering
                \begin{tikzpicture}[scale=1.2]
   		\shade[top color= jszary, bottom color=gray] (-0.75,1.5) -- (0,0) -- (0.75,1.5) -- cycle;
		  \shade[top color=gray, bottom color=jszary] (-0.75,-1.5) -- (0,0) -- (0.75,-1.5) -- cycle;
		\draw[loosely dashed, ->] (-1.5,0) -- (1.5,0) coordinate (x axis);
		\draw[loosely dashed, ->] (0,-1.5) -- (0,1.5) coordinate (y axis);
		 \end{tikzpicture}
                \caption{The $1/2$-expanding cone in $\R\times\R$.}
        \end{subfigure}
        \caption{The cones in the cone-space $\R\times\R$.}\label{rys}
\end{figure}

For  a product $E = E_1\times E_2$ we introduce cone-space $(E, \co[]{\cdot}, \ex{\cdot})$ where we put
\[
 \co[]{\x{x}}:=\|x_1\|, \quad \ex{\x{x}}:=\|x_2\|\quad\text{ for }\;\x{x}=(x_1,x_2)\in E_1\times E_2.
\]
Analogically, we define cone-space for direct sum $E = E_1\oplus E_2$.

In our main result, Corollary \ref{x_1}, the following proposition will play a crucial role.

\begin{proposition}\label{dim}
Let $E=E_1\times E_2$ be cone-space such that $\dim E_1=n$, $\dim E_2=m$ and let $r>0$ be given. Assume that we have direct sum decomposition $E=V_1\oplus V_2$ such that
\[
V_1\subset\co[r]{E}\quad\text{and}\quad V_2\subset\ex{E}_r.
\]
Then
\[
\dim V_1=n\quad\text{and}\quad\dim V_2=m.
\]
\end{proposition}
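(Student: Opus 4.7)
The plan is to exploit the direct sum hypothesis together with injectivity of the natural coordinate projections when restricted to $V_1$ and $V_2$. From $E=V_1\oplus V_2$ one immediately gets $\dim V_1+\dim V_2=n+m$, so it suffices to establish the two inequalities $\dim V_1\le n$ and $\dim V_2\le m$; equality in the proposition will then follow automatically by adding them.

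For the first inequality I would look at the projection $\pi_1\colon E_1\times E_2\to E_1$, $\pi_1(x_1,x_2):=x_1$, restricted to $V_1$. If $(0,x_2)\in V_1\subset\co[r]{E}$, then by the definition of the product semi-norms one has $\co[]{(0,x_2)}=0$ and $\ex{(0,x_2)}=\|x_2\|$, so the cone inequality $\co[]{\x{x}}\ge r\ex{\x{x}}$ collapses to $0\ge r\|x_2\|$, which forces $x_2=0$. Hence $\pi_1|_{V_1}$ is injective, and consequently $\dim V_1\le\dim E_1=n$. A symmetric argument using $\pi_2(x_1,x_2):=x_2$ restricted to $V_2\subset\ex{E}_r$ shows that any $(x_1,0)\in V_2$ satisfies $\|x_1\|=\co[]{(x_1,0)}\le r\ex{(x_1,0)}=r\cdot 0$, so $x_1=0$; therefore $\pi_2|_{V_2}$ is injective and $\dim V_2\le m$.

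Combining the two inequalities with $\dim V_1+\dim V_2=n+m$ yields $\dim V_1=n$ and $\dim V_2=m$. There is no real obstacle in this argument: it is essentially a linear-algebraic rank-nullity step glued to the defining inequalities of the two cones. The only point that needs to be double-checked is that in the product cone-space the semi-norms $\co[]{\cdot}$ and $\ex{\cdot}$ vanish precisely on the factors $\{0\}\times E_2$ and $E_1\times\{0\}$ respectively, which is immediate from the product construction introduced just before the proposition, and guarantees that the kernels of $\pi_1|_{V_1}$ and $\pi_2|_{V_2}$ are trivial.
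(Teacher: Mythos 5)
Your proof is correct and is essentially the same argument as the paper's: both reduce to the observation that $V_1\cap(\{0\}\times E_2)=\{0\}$ and $V_2\cap(E_1\times\{0\})=\{0\}$ follow from the cone inclusions, which bounds $\dim V_1\le n$ and $\dim V_2\le m$, and then the direct-sum dimension count forces equality. The paper packages the injectivity of the projection as an explicit proof by contradiction (producing a nonzero vector $(0,z)\in V_1$ from a linear dependence among first coordinates), while you state it directly as triviality of the kernel of $\pi_1|_{V_1}$; this is a cleaner phrasing of the identical idea.
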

\begin{proof}
First we show that $\dim V_1\leq n$. For an indirect proof, assume that $\dim V_1>n$. Then there exist linearly independent vectors $\x{v_1},\ldots,\x{v_{n+1}}\in V_1$. Obviously $\x{v_i}=(w_i, z_i)$ for $i\in\{1,\ldots,n+1\}$ and uniques $w_i\in E_1$, $z_i\in E_2$. Since $w_1,\ldots,w_{n+1}\in E_1$ and $\dim E_1=n$ there exist a set of $n+1$ scalars, $\alpha_1, \ldots,\alpha_{n+1}$, not all zero, such that
\[
\alpha_1 w_1+\ldots+\alpha_{n+1}w_{n+1}=0.
\]
Note that
\[
z:=\alpha_1 z_1+\ldots+\alpha_{n+1}z_{n+1}\neq 0,
\]
because otherwise vectors $\x{v_1},\ldots,\x{v_{n+1}}$ would not be linearly independent.
Consequently we obtain
\[
(0,z)=\left(\sum_{i=1}^{n+1}\alpha_i w_i, \sum_{i=1}^{n+1}\alpha_i z_i\right)\in V_1\subset\co[r]{E},
\]
and thus $r\|z\|\leq\|0\|$, which implicate $z=0$. We get a contradiction with the fact the sequence of vectors $\x{v_1},\ldots,\x{v_{n+1}}$ is linearly independent.

The proof that $\dim V_2\leq m$ is analogous. Finally, since $\dim E=n+m$ and $\dim V_1\leq n$, $\dim V_2\leq m$ we obtain
\[
\dim V_1=n,\quad\text{and}\quad\dim V_2=m.
\]
\end{proof}

By an {\em operator} we mean a linear mapping between cone-spaces $E$ and $F$. We denote the space of all operators by $\O(E,F)$. If $F=E$, we denote $\O(E,E)$ by $\O(E)$.

\medskip

Let $A\in\O(E,F)$. We define
\begin{alignat}{2}
   \co[r]{A} & := & \inf & \{R \in \R_+ \, | \,  \norm{A\x{x}}_r \leqslant R\norm{\x{x}}_r \text{ for all } \x{x}\in E: A\x{x} \in \co[r]{F}\}, \label{rate_1}\\[0.3em]
   \ex{A}_r & :=\; &\sup &\{ R \in \R_+ \, | \,  \norm{A\x{x}}_r \geqslant R\norm{\x{x}}_r \text{ for all }  \x{x} \in E: \x{x} \in \ex{E}_r\}. \label{rate_2}
\end{alignat}

Let $\tilde{E}\subset E$, $\tilde{F}\subset F$ be subspaces and let $A\in\O(E,F)$ such that $A(\tilde{E})\subset\tilde{F}$. We define
\begin{alignat*}{2}
   \co[r]{A|_{\tilde{E}}} & := & \inf & \{R \in \R_+ \, | \,  \norm{A\x{x}}_r \leqslant R\norm{\x{x}}_r \text{ for all } \x{x}\in\tilde{E}: A\x{x} \in \co[r]{\tilde{F}}\},\\[0.3em]
   \ex{A|_{\tilde{E}}}_r & :=\; &\sup &\{ R \in \R_+ \, | \,  \norm{A\x{x}}_r \geqslant R\norm{\x{x}}_r \text{ for all }  \x{x} \in\tilde{E}: \x{x} \in \ex{\tilde{E}}_r\}.
\end{alignat*}

\begin{definition}\label{def:1}
We say that $A\in\O(E,F)$ is {\em $r$-dominating}, if
   \[
   \co[r]{A} < \ex{A}_r.
   \]
By $\D_r(E,F)$ we denote the set of all $A\in\O(E,F)$ which are $r$-dominating. If $F=E$, we denote the space $\D_r(E,E)$ by $\D_r(E)$.
\end{definition}

\begin{observation}\label{ob:1}
Let $\tilde{E}\subset E$, $\tilde{F}\subset F$ be subspaces and let $A\in\O(E,F)$ such that $A(\tilde{E})\subset\tilde{F}$. We have
\[
\co[r]{A|_{\tilde{E}}}\leq\co[r]{A}\quad\text{ and }\quad\ex{A}_r\leq\ex{A|_{\tilde{E}}}_r.
\]
Moreover, if $A\in\D_r(E,F)$ then $A\in\D_r(\tilde{E},\tilde{F})$.
\end{observation}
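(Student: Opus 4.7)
The plan is to derive both inequalities directly from the definitions by observing that passing from $E,F$ to the subspaces $\tilde E,\tilde F$ shrinks the quantifier sets that appear in the definitions of $\co[r]{\cdot}$ and $\ex{\cdot}_r$, which in turn enlarges the collections of admissible $R$. The final statement will then be a one-line consequence of chaining the two resulting inequalities with the strict inequality that defines $A\in\D_r(E,F)$.

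First I would record the only geometric input that is actually needed: since the semi-norms $\co[]{\cdot}$ and $\ex{\cdot}$ on $\tilde E$ (resp.\ on $\tilde F$) are just the restrictions of those on $E$ (resp.\ on $F$), one has the tautological identities $\ex{\tilde E}_r = \ex{E}_r\cap\tilde E$ and $\co[r]{\tilde F}=\co[r]{F}\cap\tilde F$. The hypothesis $A(\tilde E)\subset\tilde F$ then guarantees that any $\x{x}\in\tilde E$ with $A\x{x}\in\co[r]{\tilde F}$ simultaneously lies in $E$ and satisfies $A\x{x}\in\co[r]{F}$.

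For the inequality $\co[r]{A|_{\tilde{E}}}\leq\co[r]{A}$, I would fix any $R$ that is admissible in the defining set of $\co[r]{A}$, so that $\norm{A\x{x}}_r\leq R\norm{\x{x}}_r$ for every $\x{x}\in E$ with $A\x{x}\in\co[r]{F}$. By the transfer above, every $\x{x}\in\tilde E$ with $A\x{x}\in\co[r]{\tilde F}$ also meets the hypothesis of the $E$-condition, so the same $R$ is admissible for $\co[r]{A|_{\tilde{E}}}$; taking infima yields the claim. The proof of $\ex{A}_r\leq\ex{A|_{\tilde{E}}}_r$ is entirely parallel: any $R$ that works in the definition of $\ex{A}_r$ still works when quantified only over $\tilde E\cap\ex{\tilde E}_r\subset\ex{E}_r$, so the defining supremum can only grow upon restriction.

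For the last part I would simply chain
\[
\co[r]{A|_{\tilde{E}}}\leq\co[r]{A}<\ex{A}_r\leq\ex{A|_{\tilde{E}}}_r,
\]
where the strict middle inequality is the assumption $A\in\D_r(E,F)$; by Definition \ref{def:1} this means exactly $A\in\D_r(\tilde E,\tilde F)$. I do not foresee any real obstacle: the only point worth pausing over is the cone-transfer identities in the second paragraph, after which everything reduces to monotonicity of $\inf$ and $\sup$ under set inclusion.
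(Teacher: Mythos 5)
Your proposal is correct and follows the same route as the paper, which simply cites \eqref{rate_1}, \eqref{rate_2} and Definition \ref{def:1}; you have merely spelled out the monotonicity-of-$\inf$/$\sup$ argument that the paper leaves implicit. Nothing to add.
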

\begin{proof}
It is consequence of \eqref{rate_1}, \eqref{rate_2} and Definition \ref{def:1}.
\end{proof}

\begin{theorem}\label{inter}
Let $A\in \D_r(E,F)$ and let  $\x{v}\in E$ be arbitrary. Then
\begin{align*}
  \x{v}\in \ex{E}_r & \implies A\x{v}\in \ex{F}_r ,\\[0.3em]
  A\x{v}\in \co[r]{F} & \implies \x{v}\in \co[r]{E}.
\end{align*}
\end{theorem}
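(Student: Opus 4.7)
The plan is to argue both implications by contradiction, pinching together the two opposite-direction bounds provided by \eqref{rate_1} and \eqref{rate_2}, with the covering property $E=\co[r]{E}\cup\ex{E}_r$ from \eqref{E} (and its analogue for $F$) supplying the dichotomy that powers each contradiction. I would first dispose of the trivial cases: the origin lies in \emph{both} cones since $\co[]{0}=\ex{0}=0$, so if $\x{v}=0$ then $A\x{v}=0$ lies in every cone and both implications hold for free. Outside this degenerate case the vectors involved are nonzero and hence have strictly positive $r$-norm (because $\norm{\cdot}_r$ is equivalent to the genuine norm $\norm{\cdot}$), which is what the chain of inequalities will need.

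For the first implication, I would take $\x{v}\in\ex{E}_r$ with $\x{v}\neq 0$ and suppose for contradiction that $A\x{v}\notin\ex{F}_r$. The cover \eqref{E} applied to $F$ then forces $A\x{v}\in\co[r]{F}$. Plugging $\x{v}$ into \eqref{rate_2} gives $\norm{A\x{v}}_r\geq\ex{A}_r\norm{\x{v}}_r$, while plugging the same $\x{v}$ into \eqref{rate_1} (whose side condition $A\x{v}\in\co[r]{F}$ is exactly what I just obtained) gives $\norm{A\x{v}}_r\leq\co[r]{A}\norm{\x{v}}_r$. Chaining the two bounds and dividing by $\norm{\x{v}}_r>0$ yields $\ex{A}_r\leq\co[r]{A}$, directly contradicting $A\in\D_r(E,F)$.

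The second implication is essentially symmetric: I would assume $A\x{v}\in\co[r]{F}$ but $\x{v}\notin\co[r]{E}$, note $\x{v}\neq 0$ (since $0\in\co[r]{E}$), deduce from \eqref{E} that $\x{v}\in\ex{E}_r$, and then run exactly the same two-sided squeeze between \eqref{rate_2} and \eqref{rate_1} to reach the same contradiction $\ex{A}_r\leq\co[r]{A}$. The only part demanding any care is the bookkeeping around the zero vector and the use of $\norm{\cdot}_r$ as a norm to justify division—otherwise no real obstacle appears, as the proof is just the composition of one inequality from each of \eqref{rate_1} and \eqref{rate_2} combined with the strict inequality $\co[r]{A}<\ex{A}_r$ built into Definition \ref{def:1}.
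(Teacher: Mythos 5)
Your proof is correct and is essentially the standard argument here: the paper itself does not write out a proof but defers to a ``simple modification'' of \cite[Proposition 2.1]{KT}, and that argument is exactly the contradiction-by-squeeze you describe, using the cover $E=\co[r]{E}\cup\ex{E}_r$ together with the two inequalities packaged in \eqref{rate_1} and \eqref{rate_2} to force $\ex{A}_r\leq\co[r]{A}$ against Definition \ref{def:1}. Your write-up actually supplies the details (including the harmless zero-vector case) that the paper leaves to the reference.
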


\begin{proof}
The proof is a simple modification of the proof of \cite[Proposition 2.1]{KT}.
\end{proof}

\begin{proposition}\label{domin}
Let $A\in \D_r(F,G)$ and $B\in \D_r(E,F)$. Then $A\circ B\in \D_r(E,G)$ and
\begin{equation}\label{star}
  \co[r]{A\circ B}\leq \co[r]{A}\cdot\co[r]{B},\; \ex{A\circ B}_r \geq \ex{A}_r\cdot\ex{B}_r.
\end{equation}
\end{proposition}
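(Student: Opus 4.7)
The statement contains two bounds in \eqref{star} together with the conclusion $A\circ B\in\D_r(E,G)$. My plan is to establish each of the two bounds by chaining the defining inequalities of $\co[r]{\cdot}$ and $\ex{\cdot}_r$ for $A$ and for $B$, using Theorem \ref{inter} as a bridge that transfers cone membership from one space to the next. With both bounds in hand, the dominance conclusion follows from a short arithmetic observation about products of non-negative reals.

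For the contracting bound I would fix $\x{x}\in E$ with $(A\circ B)\x{x}\in\co[r]{G}$, which is exactly the universe over which the infimum in \eqref{rate_1} is taken. Theorem \ref{inter} applied to $A\in\D_r(F,G)$ in the form ``$A\x{v}\in\co[r]{G}\Rightarrow\x{v}\in\co[r]{F}$'' forces $B\x{x}\in\co[r]{F}$. Now $\x{x}$ witnesses the defining condition of $\co[r]{B}$ (through $B\x{x}\in\co[r]{F}$) and $B\x{x}$ witnesses that of $\co[r]{A}$, so chaining gives
\[
\norm{(A\circ B)\x{x}}_r \;\le\; \co[r]{A}\norm{B\x{x}}_r \;\le\; \co[r]{A}\co[r]{B}\,\norm{\x{x}}_r,
\]
and taking $R=\co[r]{A}\co[r]{B}$ in \eqref{rate_1} yields $\co[r]{A\circ B}\le\co[r]{A}\co[r]{B}$. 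The expanding bound is symmetric: given $\x{x}\in\ex{E}_r$, Theorem \ref{inter} (first implication) applied successively to $B$ and to $A$ places $B\x{x}\in\ex{F}_r$ and $(A\circ B)\x{x}\in\ex{G}_r$, so the expansion inequalities chain to $\norm{(A\circ B)\x{x}}_r\ge\ex{A}_r\ex{B}_r\norm{\x{x}}_r$, whence $\ex{A\circ B}_r\ge\ex{A}_r\ex{B}_r$ from \eqref{rate_2}.

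The membership $A\circ B\in\D_r(E,G)$ then reduces to the strict inequality $\co[r]{A}\co[r]{B}<\ex{A}_r\ex{B}_r$ in combination with the two bounds just proved. The hypotheses provide $0\le\co[r]{A}<\ex{A}_r$ and $0\le\co[r]{B}<\ex{B}_r$, and I expect the only mildly non-routine point to be a short case split on whether $\co[r]{A}$ or $\co[r]{B}$ vanishes: the implication ``$a<b$ and $c<d$ imply $ac<bd$'' for non-negative reals requires $b,d>0$, which is automatic here because strict dominance forces $\ex{A}_r>0$ and $\ex{B}_r>0$. Once this is observed, the chain
\[
\co[r]{A\circ B}\;\le\;\co[r]{A}\co[r]{B}\;<\;\ex{A}_r\ex{B}_r\;\le\;\ex{A\circ B}_r
\]
closes the proof; the rest is bookkeeping with the definitions.
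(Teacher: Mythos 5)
Your argument is correct and follows essentially the same route as the paper's proof: use Theorem \ref{inter} to transfer cone membership so that the defining inequalities \eqref{rate_1} and \eqref{rate_2} can be chained for $A$ and $B$, then deduce dominance of $A\circ B$ from the two product bounds. Your extra remark that $\ex{A}_r,\ex{B}_r>0$ is needed to multiply the strict inequalities is a small point the paper leaves implicit, but it does not change the approach.
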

\begin{proof}
To prove the first inequality from \eqref{star}, let $\x{x}\in E$ and $A\circ B(\x{x})\in \co[r]{G}$. From \eqref{rate_1} and Theorem \ref{inter} we know that $B\x{x}\in \co[r]{F}$. We have
\[
  \norm{A\circ B(\x{x})}_r\leq\co[]{A} \norm{B\x{x}}_r\leq\co[]{A}\co[]{B}\norm{\x{x}}_r.
\]

Hence
\[
  \co[]{A\circ B}\leq \co[]{A}\cdot\co[]{B}.
\]
Using \eqref{rate_2} and Theorem \ref{inter}, we obtain the second inequality from \eqref{star}.

As a simple consequence of \eqref{star} we obtain $A\circ B\in \D_r(E,G)$.
\end{proof}

        %
        %
        %

\section{Operator Norms}

In this section we show how given an operator $A$ to estimate $\co[r]{A}$, $\ex{A}_r$.

Consider two cone-spaces $E = E_1\times E_2$  and $F = F_1\times F_2$. 
Let $A\colon E\to F$ be an operator given in the matrix  form by
\[
 A = \begin{bmatrix}
       A_{11} &A_{12}           \\[0.3em]
       A_{21} & A_{22}
     \end{bmatrix}.
\]
 By 
\begin{equation}\label{norm}
\norm{A} _{r}:= \max\big(\|A_{11}\| + \frac{1}{r}\|A_{12}\|, r\|A_{21}\| + \|A_{22}\|\big)
\end{equation}
we define the $r$-norm of operator $A$. Observe that it satisfies
\[
\norm{A\x{x}} _{r}\leq \norm{A} _{r}\cdot \norm{\x{x}} _r \quad \text{ for }\; \x{x}\in E.
\]
Note that in general it is not,  if $E_1$ is not one dimensional, the classical operator norm for $\norm{\cdot}_{r}$.

\begin{observation}\label{OB}
Let $r\in (0,\infty)$, $A\in\O(E_1\times E_2,F_1\times F_2)$ We put
\[
 A = \begin{bmatrix}
       A_{11} &A_{12}           \\[0.3em]
       A_{21} & A_{22}
     \end{bmatrix}
     \quad \text{and }\quad
 R = \begin{bmatrix}
       I &0           \\[0.3em]
       0 & rI
     \end{bmatrix}.
 \]
Then
\begin{align*}
\norm{\x{x}}_r  &=  \norm{R\x{x}} \quad\text{ for }\; \x{x}\in E,\\
\norm{A}_r &=  \norm{RAR^{-1}}.
\end{align*}
\end{observation}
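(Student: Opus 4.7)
My plan is to verify both identities by direct block computation, interpreting $R$ with block sizes on $E$ and $F$ matching the given decompositions. The key observation driving everything is that conjugation by the diagonal scaling $R$ is precisely what converts the unscaled norm into the $r$-scaled one.

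For the first identity I would expand $\x{x} = (x_1, x_2)\in E_1\times E_2$, so that $R\x{x} = (x_1, rx_2)$ by the block form of $R$. Since $\norm{\cdot}$ without subscript is the case $r=1$ of the $r$-norm, it equals the max of the two semi-norms, which on the product space are $\|x_1\|$ and $\|rx_2\| = r\|x_2\|$ (using $r>0$). That maximum is by definition $\norm{\x{x}}_r$, which gives the first equality.

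For the second identity I would first carry out the block multiplication
\[
RAR^{-1} = \begin{bmatrix} I & 0 \\ 0 & rI \end{bmatrix}\begin{bmatrix} A_{11} & A_{12} \\ A_{21} & A_{22} \end{bmatrix}\begin{bmatrix} I & 0 \\ 0 & r^{-1}I \end{bmatrix} = \begin{bmatrix} A_{11} & r^{-1}A_{12} \\ rA_{21} & A_{22} \end{bmatrix},
\]
and then apply formula \eqref{norm} at scaling parameter $1$ to this block matrix. The result is $\max\bigl(\|A_{11}\| + r^{-1}\|A_{12}\|,\; r\|A_{21}\| + \|A_{22}\|\bigr)$, which matches the definition of $\norm{A}_r$ verbatim.

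There is no real difficulty here; the observation is a bookkeeping identity that lets us transfer all $r$-dependence from the norm to a similarity transform. The only point that needs a moment of care is the overload of $R$ as an operator on both $E$ and $F$ (with identity blocks sized according to $E_1, E_2$ and $F_1, F_2$ respectively), after which both displayed equalities amount to the same principle. An alternative route is to deduce the second identity from the first: since $\norm{A\x{x}}_r = \norm{RA\x{x}} = \norm{(RAR^{-1})(R\x{x})}$ and $\norm{\x{x}}_r = \norm{R\x{x}}$, taking sup over $\x{x}$ and rewriting with $\x{y} = R\x{x}$ transfers the whole computation to unscaled norms; but in the end the bounds in \eqref{norm} are only upper bounds, so one must still check that both sides reduce to the \emph{same} explicit maximum, which the block computation above supplies.
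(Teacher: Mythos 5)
Your proof is correct: the paper states this observation without any proof, treating it as a routine bookkeeping fact, and your direct block computation of $R\x{x}=(x_1,rx_2)$ and $RAR^{-1}=\bigl[\begin{smallmatrix} A_{11} & r^{-1}A_{12} \\ rA_{21} & A_{22}\end{smallmatrix}\bigr]$ together with the definitions of $\norm{\cdot}_r$ and \eqref{norm} is exactly the verification the authors leave implicit. Your closing caveat is also well placed, since $\norm{\cdot}_r$ on operators is defined by the explicit formula \eqref{norm} rather than as an operator norm, so the sup-over-$\x{x}$ shortcut alone would not suffice and the block computation is genuinely needed.
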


\begin{theorem}\label{Tw}
Let $A=[A_{ij}]_{1\leq i,j\leq 2}\in\O_r(E_1\times E_2,F_1\times F_2)$.
\renewcommand{\theenumi}{\roman{enumi}}
\renewcommand{\labelenumi}{\theenumi)}
\begin{enumerate}
\item We have
\[
\co[r]{A} \leq \| A_{11}\|+\frac{1}{r}\|A_{12}\|.
\]
\item Additionally, if $A_{22}$ is invertible, then
\[
\ex{A}_r \geq \|A_{22}^{-1}\|^{-1} - r\|A_{21}\|.
\]
\end{enumerate}
\end{theorem}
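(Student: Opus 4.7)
The plan is to peel off the block structure of $A$ and use the observation that the defining cone constraints collapse the maximum in $\norm{\cdot}_r=\max(\co[]{\cdot},r\ex{\cdot})$ onto a single summand. Writing $\x{x}=(x_1,x_2)\in E_1\times E_2$, I would first record
\[
\co[]{A\x{x}}=\|A_{11}x_1+A_{12}x_2\|,\qquad \ex{A\x{x}}=\|A_{21}x_1+A_{22}x_2\|,
\]
and then treat the two parts independently.

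For (i), I would fix $\x{x}\in E$ with $A\x{x}\in\co[r]{F}$. By Definition \ref{D1} this forces $\co[]{A\x{x}}\geq r\ex{A\x{x}}$, so $\norm{A\x{x}}_r=\co[]{A\x{x}}=\|A_{11}x_1+A_{12}x_2\|$. A single triangle inequality, followed by the elementary bounds $\|x_1\|\leq\norm{\x{x}}_r$ and $\|x_2\|\leq r^{-1}\norm{\x{x}}_r$, yields the constant $\|A_{11}\|+r^{-1}\|A_{12}\|$, and feeding this into the definition \eqref{rate_1} gives (i).

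For (ii), I would fix $\x{x}\in\ex{E}_r$, noting this forces $\norm{\x{x}}_r=r\|x_2\|$. The key estimate is $\norm{A\x{x}}_r\geq r\,\ex{A\x{x}}=r\|A_{21}x_1+A_{22}x_2\|$; from there the reverse triangle inequality, the standard bound $\|A_{22}x_2\|\geq\|A_{22}^{-1}\|^{-1}\|x_2\|$ (which requires the invertibility hypothesis on $A_{22}$), and the cone condition $\|x_1\|\leq r\|x_2\|$ combine to give
\[
\norm{A\x{x}}_r\geq \bigl(\|A_{22}^{-1}\|^{-1}-r\|A_{21}\|\bigr)\,r\|x_2\|=\bigl(\|A_{22}^{-1}\|^{-1}-r\|A_{21}\|\bigr)\norm{\x{x}}_r,
\]
and \eqref{rate_2} then delivers the asserted lower bound on $\ex{A}_r$.

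I do not anticipate a real obstacle here; both parts reduce to triangle-inequality arithmetic once one uses the cone membership to drop one of the two entries inside $\max$ in $\norm{\cdot}_r$. The mildly delicate bookkeeping, if any, is tracking in (ii) that it is the bound $\|x_1\|\leq r\|x_2\|$ from the expanding cone that produces the factor of $r$ appearing in the $-r\|A_{21}\|$ term. As an independent sanity check I would compare with Observation \ref{OB}: conjugating by $R$ turns the $r$-norm into the ordinary operator norm of the block matrix $RAR^{-1}$, and the two bounds above match the row-sums of $RAR^{-1}$ as in \eqref{norm}.
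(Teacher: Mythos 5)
Your proposal is correct and follows essentially the same route as the paper: both parts come from using the cone membership to identify which entry of the $\max$ realizes $\norm{\cdot}_r$, then applying the (reverse) triangle inequality together with $\|A_{22}x_2\|\geq\|A_{22}^{-1}\|^{-1}\|x_2\|$. The only cosmetic difference is that the paper writes the argument for $r=1$ and reduces the general case via Observation \ref{OB}, whereas you carry the factor $r$ through directly — exactly the conjugation check you mention at the end.
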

\begin{proof}We show the proof for $r=1$. For arbitrary $r\in(0,\infty)$ the proof  needs a simple modification, see Observation \ref{OB}.

For the proof of the first inequality, we take $\x{x}=(x_1,x_2)\in E_1\times E_2$ such that $A\x{x}\in\co[]{F}$. From Definition \ref{D1} we have 
\begin{equation}\label{eq:a}
\|A_{11}x_1+A_{12}x_2\|\geq \|A_{21}x_1+A_{22}x_2\|,
\end{equation}
 therefore
\begin{align*}
\norm{A\x{x}} &= \max(\|A_{11}x_1+A_{12}x_2\|, \|A_{21}x_1+A_{22}x_2\|)\\
&\overset{\eqref{eq:a}}{=}\|A_{11}x_1+A_{12}x_2\|\leq \{\|A_{11}\|\cdot \|x_1\| + \|A_{12}\|\cdot \|x_2\|\}\\
&\leq (\|A_{11}\| + \|A_{12}\|)\cdot \norm{\x{x}}.
 \end{align*}
 
For the proof of the second inequality, suppose that $\x{x}=(x_1,x_2)\in\ex{E}$, where $x_1\in E_1$, $x_2\in E_2$. Then
\begin{equation}\label{eq:b}
\|x_1\|\leq\|x_2\|=\norm{\x{x}}.
\end{equation}
 
We know that
\begin{equation}\label{E1}
\|A_{22}x_2\|\geq \|A_{22}^{-1}\|^{-1}\|x_2\| \geq 0.
\end{equation}
Finally, we obtain
\begin{align*}
\norm{A\x{x}} &\geq \|A_{21}x_1+A_{22}x_2\|\geq\|A_{22}x_2\| - \|A_{21}x_1\|\\
&\overset{\eqref{E1}}{\geq}\|A_{22}^{-1}\|^{-1}\|x_2\| -  \|A_{21}\|\|x_1\|\overset{\eqref{eq:b}}{\geq}\left(\|A_{22}^{-1}\|^{-1} -  \|A_{21}\|\right)\cdot\norm{\x{x}}.
\end{align*} 
\end{proof}

\begin{example}
Let us verify that the matrix $A\in\O(\R\times\R,\R\times\R)$
\[
 A =
 \begin{bmatrix}
  2 & 1.5 \\
  1 & 5
 \end{bmatrix}
\] 
is dominating.

By Theorem \ref{Tw} we have
\[
\co[]{A} \leq 3.5  < 4 \leq\ex{A},
\]
and therefore $A$ is dominating.
\end{example}

        %
        %
        %

\section{The Main Results}

In this section we show the main results of the paper, concerns the strict location of the eigenspace.

We denote the spectrum of the operator $A$ by $\sigma(A):=\{\lambda\in\C\; :\; A-\lambda I \text{ is singular}\}$.

\begin{lemma}\label{lem}
Let $A\in\D_r(E)$. Then
\begin{equation}\label{eq:1}
\lambda\in\sigma(A) \implies |\lambda|\subset [0,\co[r]{A}]\cup [\ex{A}_r,\infty).
\end{equation}
Moreover $[0,\co[r]{A}]\cap [\ex{A}_r,\infty)=\emptyset$.
\end{lemma}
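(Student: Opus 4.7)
The plan is to handle the two parts separately. The ``moreover'' statement is immediate: by Definition \ref{def:1}, $A\in\D_r(E)$ means precisely $\co[r]{A}<\ex{A}_r$, so the two intervals are disjoint. For the main implication, I will pick a nonzero eigenvector and use the decomposition \eqref{E} to split into two cases depending on which cone contains it, then apply the defining inequalities \eqref{rate_1} and \eqref{rate_2}.

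So let $\lambda\in\sigma(A)$ and choose $\x{v}\in E$ with $\x{v}\neq 0$ and $A\x{v}=\lambda\x{v}$. By \eqref{E} either $\x{v}\in\co[r]{E}$ or $\x{v}\in\ex{E}_r$. In the first case, I observe that $\co[r]{E}$ is closed under scalar multiplication (the defining condition $\co[]{\x{x}}\geq r\ex{\x{x}}$ is homogeneous), so $A\x{v}=\lambda\x{v}\in\co[r]{E}$ and hence by \eqref{rate_1} one has $\norm{A\x{v}}_r\leq\co[r]{A}\,\norm{\x{v}}_r$. Since $\norm{A\x{v}}_r=|\lambda|\,\norm{\x{v}}_r$ and $\norm{\x{v}}_r>0$, dividing yields $|\lambda|\leq\co[r]{A}$.

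In the second case, $\x{v}\in\ex{E}_r$, so by \eqref{rate_2} one has $\norm{A\x{v}}_r\geq\ex{A}_r\,\norm{\x{v}}_r$; again using $\norm{A\x{v}}_r=|\lambda|\,\norm{\x{v}}_r$ and $\norm{\x{v}}_r>0$ this gives $|\lambda|\geq\ex{A}_r$. (Alternatively, one can invoke Theorem \ref{inter} to note $A\x{v}\in\ex{E}_r$, but this is already automatic from $A\x{v}=\lambda\x{v}$.) Combining the two cases, $|\lambda|\in[0,\co[r]{A}]\cup[\ex{A}_r,\infty)$, as required.

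There is no real obstacle here; the only subtle point is verifying that each cone is stable under the scalar multiplication by $\lambda$ (including $\lambda=0$, where one checks directly that $0\in\co[r]{E}$ and the inequality degenerates trivially), and remembering that the $r$-norm is positively homogeneous as the maximum of two semi-norms, so $\norm{\lambda\x{v}}_r=|\lambda|\,\norm{\x{v}}_r$ holds also for complex scalars.
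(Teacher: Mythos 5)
Your proof is correct and follows essentially the same route as the paper's: disjointness of the intervals is read off from Definition \ref{def:1}, and the implication is obtained by splitting on whether the eigenvector lies in $\co[r]{E}$ or $\ex{E}_r$ via \eqref{E} and then applying \eqref{rate_1} or \eqref{rate_2}. The extra care you take with homogeneity of the cones and positivity of $\norm{\x{v}}_r$ is a welcome (if implicit in the paper) addition.
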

\begin{proof}
Since $A\in\D_r(E)$ we get $[0,\co[r]{A}]\cap [\ex{A}_r,\infty)=\emptyset$.

Now we show implication \eqref{eq:1}. Let $\lambda$ be an eigenvalue of $A$ and let $\x{x}\in E$ be a corresponding eigenvector. By \eqref{E} we know that $\x{x}\in\co[r]{E}\cup\ex{E}_r$. We consider two cases. First suppose that $\x{x}\in\co[r]{E}$. Since $\x{x}$ is an eigenvector, $A\x{x} = \lambda\x{x}$, and thus $A\x{x}\in\co[r]{E}$. By \eqref{rate_1} we get
\[
|\lambda|\leq\co[r]{A}.
\]
Now suppose that $\x{x}\in\ex{E}_r$. By \eqref{rate_2} we get
\[
|\lambda|\geq\ex{A}_r,
\]
which complete the proof.
\end{proof}

Let $E$ be a vector space over the field $\K$ and let operator $A\colon E\to E$ be given.  One can easy deduce from the theorem of Jordan, see \cite[Appendix to Chapter 4]{irwin} for the general case, that if $\sigma = \sigma_1\cup\sigma_2$ (for  $\K=\R$ we assume additional that $\bar{\sigma}_1=\sigma_1$ and $\bar{\sigma}_2=\sigma_2$) then there is a unique direct sum decomposition $E=E_{\sigma_1}\oplus E_{\sigma_2}$ such that $A(E_{\sigma_1})\subset E_{\sigma_1}$, $A(E_{\sigma_2})\subset E_{\sigma_2}$ and $\sigma(A|_{E_{\sigma_1}})=\sigma_1$, $\sigma(A|_{E_{\sigma_2}})=\sigma_2$. For $c>0$ we define
\[
E_{\leq c}:=E_{\{\lambda\; :\; |\lambda|\leq c\}}\quad\text{and}\quad E_{\geq c}:=E_{\{\lambda\; :\; |\lambda|\geq c\}}.
\]

\begin{theorem}\label{Tw1}
Let $E$ be a finite-dimensional cone-space and let $A\in\D_r(E)$. Then there is a direct sum decomposition 
\[
E=E_{\leq\co[r]{A}}\oplus E_{\geq\ex{A}_r}\quad\text{ and }\quad E_{\leq\co[r]{A}} \subset  \co[r]{E},\; E_{\geq\ex{A}_r} \subset \ex{E}_r.\label{a1}
\]
\end{theorem}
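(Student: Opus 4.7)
The plan is to apply Lemma~\ref{lem} to split the spectrum, invoke the Jordan-based decomposition cited immediately before the theorem to produce the direct sum, and then verify the two cone inclusions by iterating $A$ (respectively $A^{-1}$) on the corresponding invariant summand, playing the cone-iteration growth rate off against the spectral radius.

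By Lemma~\ref{lem}, $\sigma(A)$ decomposes disjointly as $\sigma_1 \cup \sigma_2$, where $\sigma_1 = \sigma(A) \cap \{\lambda : |\lambda| \leq \co[r]{A}\}$ and $\sigma_2 = \sigma(A) \cap \{\lambda : |\lambda| \geq \ex{A}_r\}$. Both sets are invariant under complex conjugation, since they are defined by a condition on modulus, so the Jordan-type result cited above produces the $A$-invariant direct sum $E = E_{\leq \co[r]{A}} \oplus E_{\geq \ex{A}_r}$ with spectra $\sigma_1$ and $\sigma_2$ on the summands.

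For the inclusion $E_{\leq \co[r]{A}} \subset \co[r]{E}$ we argue by contradiction. If some $v \in E_{\leq \co[r]{A}}$ lies outside $\co[r]{E}$, then by \eqref{E} it lies in $\ex{E}_r$ with $\ex{v} > 0$, so $v \neq 0$. Observation~\ref{ob:1} shows that $A|_{E_{\leq \co[r]{A}}}$ is still $r$-dominating with expansion rate at least $\ex{A}_r$, so iterating Theorem~\ref{inter} keeps $A^n v \in \ex{E}_r$ for all $n$, and \eqref{rate_2} gives the lower bound $\norm{A^n v}_r \geq \ex{A}_r^n \norm{v}_r$. Meanwhile, the spectral radius of $A|_{E_{\leq \co[r]{A}}}$ is at most $\co[r]{A}$, so the spectral radius formula (with respect to the operator norm induced by $\norm{\cdot}_r$) furnishes an upper bound $\norm{A^n v}_r \leq C_\varepsilon (\co[r]{A} + \varepsilon)^n \norm{v}_r$ for every $\varepsilon > 0$ and all large $n$. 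Taking $\varepsilon$ with $\co[r]{A} + \varepsilon < \ex{A}_r$ forces $(\ex{A}_r/(\co[r]{A}+\varepsilon))^n$ to blow up, the sought contradiction.

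The inclusion $E_{\geq \ex{A}_r} \subset \ex{E}_r$ is handled symmetrically, now iterating $A^{-1}$, which is well-defined on $E_{\geq \ex{A}_r}$ because that part of the spectrum is bounded away from~$0$. A putative $v \in E_{\geq \ex{A}_r} \setminus \ex{E}_r$ lies in $\co[r]{E}$ by \eqref{E}; setting $v_{-n} := A^{-n} v$ and inductively applying the direct implication of Theorem~\ref{inter} (from $A v_{-n} = v_{-(n-1)} \in \co[r]{E}$ deduce $v_{-n} \in \co[r]{E}$) yields $A^{-n} v \in \co[r]{E}$ for all $n \geq 0$, and then \eqref{rate_1} gives $\norm{v}_r \leq \co[r]{A}^n \norm{A^{-n} v}_r$. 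Pairing this with the spectral radius bound $\norm{A^{-n} v}_r \leq C_\varepsilon (1/\ex{A}_r + \varepsilon)^n \norm{v}_r$ and picking $\varepsilon$ small enough that $\co[r]{A}(1/\ex{A}_r + \varepsilon) < 1$ (possible since $\co[r]{A} < \ex{A}_r$) once more produces a contradiction. The main obstacle throughout is harmonising these two kinds of estimates on each invariant summand---the geometric cone-iteration bound controlled by $\co[r]{A}$ and $\ex{A}_r$ versus the algebraic spectral-radius bound---and the strict gap $\co[r]{A} < \ex{A}_r$ is precisely the slack, after an $\varepsilon$-perturbation, that makes them incompatible unless the witness vector is zero.
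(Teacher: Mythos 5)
Your proposal is correct and follows essentially the same route as the paper: Lemma \ref{lem} plus the Jordan-based decomposition give the invariant splitting, and each cone inclusion is proved by contradiction, pitting the iterated cone estimate from Theorem \ref{inter} together with \eqref{rate_1}/\eqref{rate_2} against a spectral-radius (Gelfand-type) upper bound on the invariant summand. The paper phrases the spectral bound as $\limsup_n \sqrt[n]{\norm{A^n \x{x}}} \leq \co[r]{A}$ rather than via a constant $C_\varepsilon$, but this is only a cosmetic difference.
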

\begin{proof}
For the clarity  of presentation we consider only the case $r=1$ and we omit the subscript $r$.

From Lemma \ref{lem} and Jordan Theorem we obtain a direct sum decomposition
\[
E=E_{\leq\co[]{A}}\oplus E_{\geq\ex{A}},
\]
such that 
\[
\sigma(A|_{E_{\leq\co[]{A}}})=[0,\co[]{A}]\quad\text{and}\quad \sigma(A|_{E_{\geq\ex{A}}})=[\ex{A},\infty).
\]

Now we show $E_{\leq\co[]{A}} \subset  \co[]{E}$. Consider an arbitrary $\x{x}\in E_{\leq\co[]{A}}$. The case when $\x{x}=0$ is obvious. Assume that $\x{x}\neq 0$. Without loss of generality  we can assume that $\|\x{x}\| = 1$.
For an indirect proof, assume that $\x{x}\notin\co[]{E}$. Then $\x{x}\in\ex{E}$ (by \ref{E}). Let $\e>0$ be arbitrary. From the fact that $\x{x}\in E_{\leq\co[]{A}}$, we know that
\[
\limsup\limits_{n\rightarrow +\infty}\sqrt[n]{\norm{A^n \x{x}}}\leq\co[]{A},
\]
and thus there exists $m\in\N$ such that for all $n\in\N$ 
\[
n\geq m \Rightarrow \sqrt[n]{\norm{A^n \x{x}}}\leq \co[]{A}+\e.
\]

Since $\x{x}\in\ex{E}$ and from Theorem \ref{inter} we obtain
\[
\x{x}\in\ex{E} \Rightarrow A\x{x}\in \ex{E} \Rightarrow  \cdots  \Rightarrow A^n\x{x}\in\ex{E}.
\]
Using \eqref{rate_2} we get
\begin{align*}
\norm{A\x{x}}&\geq \ex{A} \norm{\x{x}}, \\
\norm{A^2\x{x}} = \norm{A(A\x{x})}&\geq \ex{A} \norm{A\x{x}}\geq \ex{A}^2 \norm{\x{x}}, \\ 
&\;\, \vdots\\
\norm{A^n\x{x}}&\geq \ex{A}^n \norm{\x{x}}. 
\end{align*}
Finally we have
\[
\ex{A} = \sqrt[n]{\ex{A}^n} \leq \sqrt[n]{\norm{A^n \x{x}}} \leq \co[]{A}+\e.
\]
Since $\e$ was arbitrary, we get a contradiction with the fact that $A$ is $r$-dominating.

Analogously to the proof of the second conclusion, assume that $\x{x}\in E_{\geq\ex{A}}$ and $\x{x}\notin\ex{E}$.  Then  $\x{x}\in\co[]{E}$. Since $\sigma(A|_{E_{\geq\ex{A}}})=\sigma_{{\geq\ex{A}}}:=\{\lambda\; :\; |\lambda|\geq\ex{A}\}$ and $0\notin\sigma_{{\geq\ex{A}}}$ we know that $A|_{E_{\geq\ex{A}}}\colon E_{\geq\ex{A}}\to E_{\geq\ex{A}}$ is invertible.
Let $\e>0$ be arbitrary. Using the fact that $\x{x}\in  E_{\geq\ex{A}}$ we know that
\[
 \limsup\limits_{n\rightarrow +\infty}\sqrt[n]{\norm{A|_{ E_{\geq\ex{A}}}^{-n} \x{x}}}\leq\ex{A}^{-1},
 \]
and thus there exists $m\in\N$ such that for all $n\in\N$
\begin{equation}\label{e1}
n\geq m \Rightarrow\sqrt[n]{\norm{A|_{ E_{\geq\ex{A}}}^{-n} \x{x}}}\leq \ex{A}^{-1}+\e.
\end{equation}
From Observation \ref{ob:1} and Theorem \ref{inter} we get
\[
\x{x}\in\co[r]{ E_{\geq\ex{A}}} \Rightarrow A|_{ E_{\geq\ex{A}}}^{-1}\x{x}\in \co[]{ E_{\geq\ex{A}}} \Rightarrow  \cdots  \Rightarrow A|_{ E_{\geq\ex{A}}}^{-n}\x{x}\in\co[]{ E_{\geq\ex{A}}},
\]
and from \eqref{rate_1} we have
\begin{align*}
\norm{\x{x}}&\leq \co[]{A|_{ E_{\geq\ex{A}}}} \norm{A|_{ E_{\geq\ex{A}}}^{-1}\x{x}}, \\
\norm{A|_{ E_{\geq\ex{A}}}^{-1}\x{x}} &\leq \co[]{A|_{ E_{\geq\ex{A}}}} \norm{A|_{ E_{\geq\ex{A}}}^{-2}\x{x}}, \\ 
&\;\, \vdots\\
\norm{A|_{ E_{\geq\ex{A}}}^{-n+1}\x{x}}&\leq \co[]{A|_{ E_{\geq\ex{A}}}} \norm{A|_{ E_{\geq\ex{A}}}^{-n}\x{x}}.
\end{align*}
Hence
\begin{equation}\label{e2}
\norm{\x{x}}\leq (\co[]{A|_{ E_{\geq\ex{A}}}})^n\norm{A|_{ E_{\geq\ex{A}}}^{-n}\x{x}}.
\end{equation}

Finally from Observation \ref{ob:1} and \eqref{e1}, \eqref{e2} we obtain
\[
\co[]{A}\geq\co[r]{A|_{ E_{\geq\ex{A}}}} = \sqrt[n]{(\co[]{A|_{ E_{\geq\ex{A}}}})^{n}}\geq\sqrt[n]{\frac{1}{\norm{A|_{ E_{\geq\ex{A}}}^{-n} \x{x}}}}\geq\frac{1}{\ex{A}^{-1}+\e}=\ex{A}\cdot \frac{1}{1+\e\cdot\ex{A}},
\]
which get a contradiction with the fact that $A$ is $r$-dominating.
\end{proof}

By $\I$ we denote interval $\interval{-1,1}$. For $\e>0$ we put $B_{\C}(0,\e):=\{\x{z}\in\C\;:\; |\x{z}|\leq\e\}$.

\begin{corollary}\label{x_1}
Let $\e>0$ and $N\in\N$. Assume that an operator $A\in\D_r(\K\times\K^{N-1})$ for  $r = 1/\e$ is given.
\renewcommand{\theenumi}{\roman{enumi}}
\renewcommand{\labelenumi}{\theenumi)}
\begin{enumerate}
\item $\K=\R$. Then there exist unique eigenvalue $\lambda$ of $A$ such that $|\lambda|\leq\co[r]{A}$ and the eigenspace is one-dimensional space. The unique (module rescaling) eigenvector $\x{x}$ corresponding to the eigenvalue $\lambda$ satisfies
\[
\x{x}\in (1,0,\ldots,0)^T +\e\cdot(0,\I,\ldots,\I)^T.
\]
\item $\K=\C$. Then there exist unique eigenvalue $\lambda$ of $A$ such that $|\lambda|\leq\co[r]{A}$ and the eigenspace is one-dimensional space. The unique (module rescaling) eigenvector $\x{x}$ corresponding to the eigenvalue $\lambda$ satisfies
\[
\x{x}\in(1,0,\ldots,0)^T+\{0\}\times B_{\C}(0,\e)^{N-1}\subset(1,0,\ldots,0)^T+\e\cdot(0,\I,\ldots,\I)^T + \e\cdot(0,\I,\ldots,\I)^Ti.
\]
\end{enumerate}
\end{corollary}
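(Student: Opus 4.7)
The plan is to combine Theorem \ref{Tw1}, Proposition \ref{dim}, and the definition of the contracting cone to extract both the unique slow eigenvalue and a tight coordinate-wise enclosure of its eigenvector.

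First I would apply Theorem \ref{Tw1} to $A \in \D_r(\K \times \K^{N-1})$, obtaining the direct sum decomposition $E = E_{\leq \co[r]{A}} \oplus E_{\geq \ex{A}_r}$ with $E_{\leq \co[r]{A}} \subset \co[r]{E}$ and $E_{\geq \ex{A}_r} \subset \ex{E}_r$. Feeding this into Proposition \ref{dim} with $E_1 = \K$ and $E_2 = \K^{N-1}$ (so $n = 1$, $m = N-1$), the cone inclusions force $\dim E_{\leq \co[r]{A}} = 1$ and $\dim E_{\geq \ex{A}_r} = N-1$. Since $E_{\leq \co[r]{A}}$ is a one-dimensional $A$-invariant subspace over $\K$, $A$ restricts there to multiplication by a unique scalar $\lambda \in \K$; Lemma \ref{lem} then certifies that this $\lambda$ is the only eigenvalue of $A$ in the closed disc $|\lambda| \leq \co[r]{A}$, with one-dimensional eigenspace.

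Next I would extract the shape of a nonzero eigenvector $\x{x} = (x_1, x_2) \in E_{\leq \co[r]{A}} \subset \co[r]{E}$. By Definition \ref{D1} one has
\[
\|x_1\| = \co[]{\x{x}} \;\geq\; r\,\ex{\x{x}} \;=\; \tfrac{1}{\e}\|x_2\|,
\]
so $x_1 = 0$ would force $x_2 = 0$, contradicting $\x{x} \neq 0$. Hence $x_1 \neq 0$, and after rescaling so that $x_1 = 1$ the inequality gives $\|x_2\|_\infty \leq \e$.

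Finally I would unpack this sup-norm bound into the desired coordinate enclosures. In the real case each component of $x_2$ lies in $[-\e, \e] = \e \I$, which is precisely statement (i). In the complex case each component lies in $B_{\C}(0, \e)$; the inclusion $B_{\C}(0,\e) \subset \e\I + \e\I i$ follows from $|\Re \x{z}|, |\Im \x{z}| \leq |\x{z}|$, and together these yield both inclusions in (ii). I do not anticipate a real obstacle: the decisive step is Proposition \ref{dim}, which is exactly what pins the slow eigenspace to one dimension and collapses the decomposition to a single eigenvector statement.
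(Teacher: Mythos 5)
Your proposal is correct and follows the same route as the paper, which simply cites Theorem \ref{Tw1} and Proposition \ref{dim}; you have filled in the details (the cone inequality $\|x_2\|\leq \e\|x_1\|$ forcing $x_1\neq 0$, the normalization $x_1=1$, and the inclusion $B_{\C}(0,\e)\subset \e\I+\e\I i$) that the paper leaves implicit. No gaps.
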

\begin{proof}
It is a direct consequence of Theorem \ref{Tw1} and Proposition \ref{dim}.
\end{proof}

\begin{example}
Consider the following square matrix
\[
A=
\begin{bmatrix}
1 & 0.4 & 0.5 \\[0.3em]
0.4  & 4 & 0.4  \\[0.3em]
0.5 & 0.4 & 8
 \end{bmatrix}.
\]
We find the localization of $\x{x_1}$ eigenvector corresponding to the eigenvalue of smallest module. Since $A$ satisfies  for $r = 1.1$
\[
\co[r]{A}\leq 1.81(81) < 3.24 \leq \ex{A}_{r},
\]
we can use Theorem \ref{Tw1}. We get
\begin{equation}\label{equ:2}
  \x{x_1}\in \{(x_1, x_2, x_3)\; :\; \max\{|x_2|, |x_3|\}\leq \dfrac{10}{11}|x_1|\}.
\end{equation}

Finding the roots of the characteristic polynomial of $A$, which is equal to
\begin{align*}
W(\lambda) & = -\lambda^3+13\lambda^2-43.43\lambda+29.72 \\[0.3em]
& = -(\lambda-4)(\lambda-\frac{9-\sqrt{51.28}}{2})(\lambda-\frac{9+\sqrt{51.28}}{2}).
\end{align*}
we get
\begin{alignat*}{2}
&\x{x_1}=\left( \dfrac{\lambda_1-7.5}{\lambda_1-0.5}x, \dfrac{0.8\lambda_1-3.2}{(\lambda_1-4)(\lambda_1-0.5)}x, x\right)\; && \text{ for } \; x\in\R,\\[0.3em]
&\x{x_1}\approx\left( - 15.686641 x, 1.9070447x, x\right)\; &&\text{ for } \; x\in\R,
\end{alignat*}
Note that $\x{x_1}$ satisfies \eqref{equ:2}.
\end{example}

        %
        %
        %
\section{Applications}

In this section we show  the numerical applications of our theory. For the purposes of this article, we use the programming language c++ with `Boost` libraries \cite{boost} (version 1.51.0), which contains the interval arithmetic, and software library for numerical linear algebra `Linear Algebra PACKage` \cite{lapack} (revision 1348). Alternatively, one could use numerical computing environment `Matlab` with an interval package. In the first section  we presented a simple algorithm (see Algorithm \ref{code}), which strictly localizes the position of particular eigenvalues and eigenvectors of a matrix.

Now we discuss in more details the steps from the Algorithm \ref{code}. First we determine the numerical approximation of eigenvectors $\x{\tilde{x}_1}, \ldots, \x{\tilde{x}_N}$ and corresponding eigenvalues $\tilde{\lambda}_1,\ldots,\tilde{\lambda}_N$ of the matrix $A\colon\C^N\to\C^N$. In the step \ref{p:1} of this algorithm we create matrix $P$ of numerical approximate eigenvectors $\x{\tilde{x}_1}, \ldots, \x{\tilde{x}_N}$.
For the step \ref{p:2} we calculate the inverse matrix of the interval matrix which is composed of the approximate eigenvectors. The  inverse matrix we can determine by the method of Gauss-Jordan elimination.
In the next step we calculate the interval matrix 
\[
J=\interval{P^{-1}}\cdot\interval{A}\cdot\interval{P}.
\]
The most important stage of our method is the step \ref{p:4}. In this step we want to take advantage of Corollary \ref{x_1}, but first we need to check whether our matrix $J$ satisfies the assumptions of this corollary (is it $r$-dominating). We want to localize the $k$-th eigenvector of a general matrix. We create matrices $B^{\e}$ (see \eqref{B^e}) and we check whether the following inequality is satisfied (see Theorem \ref{Tw}):
\[
\co[]{B^{\e}} < \ex{B^{\e}}.
\]
If we find an $\e$ which satisfies the above condition for the matrix $B^{\e}$, then from Corollary \ref{x_1} we know that the $k$-th eigenvector of $J$ satisfies
\[
(1,0,\ldots,0)^T+\e\cdot(0,\I,\ldots,\I)^T + \e\cdot(0,\I,\ldots,\I)^Ti,
\]
where $\I=\interval{-1,1}$. Since $J=\interval{P^{-1}}\cdot\interval{A}\cdot\interval{P}$ we obtain the $k$-th eigenvector of $A$ multiplying the $k$-th eigenvector of $J$ by interval matrix $\interval{P}$. Finally we have
\[
\x{x_k}\in \interval{\x{\tilde{x}_k}} +\interval{-\e,\e}\cdot \sum_{i\neq k}^{N} \interval{\Re(\x{\tilde{x}_i})}+\interval{-\e,\e}\cdot \sum_{i\neq k}^{N} \interval{\Im(\x{\tilde{x}_i})}i.
\]
The eigenvalue $\lambda_k$ corresponding to the eigenvector $\x{x_k}$ we estimate from
\begin{equation}\label{eigenvalue}
\lambda_k \in \frac{\langle A\x{x_k}, \x{x_k}\rangle}{\langle\x{x_k},\x{x_k}\rangle}.
\end{equation}

Now we show some examples where we used our method.

\begin{example}\label{ex}
Consider $500$ random matrices of size $5\times 5$ with randomly generated values from the set $[-1,1]$. For each such matrix we launched our program to find accurate $\e$ location of all eigenvectors. We obtained $\e\in[10^{-15},10^{-5}]$ and for the results we have the fist-quantile to equal $6\cdot10^{-13}$, the median to equal $10^{-11}$ and the third-quantile to equal $8\cdot10^{-10}$. 
\end{example}

\begin{example}
Let $A$ be given in the matrix form
\[
A=
\begin{bmatrix}
-2 & 0 & 3 & 1 & -1 & 3 & -3 & -2 & 4 & 1 \\[0.3em]
0 & 3 & -1 & 4 & 2 & 2 & -3 & 0 & -4 & 0 \\[0.3em]
0 & 0 & -3 & -4 & 2 & -2 & 0 & -3 & -1 & 1 \\[0.3em]
-4 & -1 & 4 & 1 & -1 & 2 & 4 & 1 & 2 & 0 \\[0.3em]
3 & -1 & 4 & 0 & 4 & 3 & -2 & 0 & 1 & 3 \\[0.3em]
4 & -1 & 1 & 2 & 1 & -4 & 2 & -2 & -4 & -2 \\[0.3em]
0 & 4 & 1 & -1 & -2 & -4 & -2 & 4 & 1 & -1 \\[0.3em]
-3 & -4 & 2 & 0 & 0 & 0 & 0 & 0 & 4 & 0 \\[0.3em]
4 & -4 & 2 & 0 & -1 & 0 & -2 & -4 & 4 & 0 \\[0.3em]
-4 & -3 & 4 & 4 & 0 & 4 & -3 & 3 & 3 & -1
 \end{bmatrix}.
 \]
Our program found the following set of $\e\in[10^{-10},10^{-8}]$.

Due to the large size of eigenvectors of $A$ we write only first of them
\begin{alignat*}{2}
\x{x_1}\in &(&0.0032301+&[32, 51]\cdot 10^{-9}+( - 0.1909691+[-14, -10]\cdot 10^{-9})i,\\
&&0.0032301+&[20, 62]\cdot 10^{-9}+\;\; (0.1909691+[\quad\!\;9,\quad\! 16]\cdot 10^{-9})i,\\
&&0.1312400+&[61, 85]\cdot 10^{-9}+[-34, 34]\cdot 10^{-10}i,\\
&&0.2755005+&[42, 60]\cdot 10^{-9}+[-28, 28]\cdot 10^{-10}i,\\
&&0.1849394+&[66, 98]\cdot 10^{-9}+(- 0.0068575+[-54, -45]\cdot 10^{-9})i,\\
&&0.1849394+&[66, 98]\cdot 10^{-9}+\;\; (0.0068575+[\quad\!45,\quad\! 54]\cdot 10^{-9})i,\\
&&0.3754802+&[56, 77]\cdot 10^{-9}+\;\;(0.0266195+[\quad\!36,\quad\! 44]\cdot 10^{-9})i,\\
&&0.3754802+&[56, 77]\cdot 10^{-9}+(- 0.0266195+[-43, -36]\cdot 10^{-9})i,\\
&&0.1456179+&[89, 94]\cdot 10^{-9}+[-54, 54]\cdot 10^{-11}i,\\
&-&0.0358730+&[-7, 0]\cdot 10^{-9}+[-78, 78]\cdot 10^{-11}i ),
\end{alignat*}

The eigenvalue corresponding to the eigenvector $\x{x_1}$ satisfies
\[
\lambda_1\in 5.56625+[46, 81]\cdot10^{-7} + (3.1629\;+[69, 72]\cdot10^{-6})i.
\]
\end{example}

\begin{example}\label{roots}
Consider the polynomial
\[
W(\x{x})=\x{x}^5+(5-i)\x{x}^4-7i\x{x}^2+(2+4i)\x{x}-8.
\]
To find all the roots of polynomial $W(\x{x})$ we estimate the eigenvalues of the matrix 
\[
A=
\begin{bmatrix}
0 & 1 & 0 & 0 & 0 &\\[0.3em]  
0 & 0 & 1 & 0 & 0 & \\[0.3em]  
0 & 0 & 0 & 1 & 0 & \\[0.3em]  
0 & 0 & 0 & 0 & 1 & \\[0.3em]
8 & -2-4i & 7i & 0 & -5+i
\end{bmatrix}.
\]
From our program we obtain
\[
\begin{array}{cl}
\lambda_1\in &-5.1189735+[-66, -07]\cdot 10^{-9} + 1.2610393+[39, 67]\cdot 10^{-9}i,\\
\lambda_2\in &\quad\!0.384\;\,+[\quad\!37,\quad\! 42]\cdot 10^{-5} + 1.2215+[\quad\!45,\quad\! 97]\cdot 10^{-6}i,\\
\lambda_3\in &\quad\!0.9572+[\quad\!53,\quad\! 90]\cdot 10^{-6} + 0.1374+[\quad\!16,\quad\! 42]\cdot 10^{-6}i,\\
\lambda_4\in &-1.0805+[-80, -10]\cdot 10^{-6}\, - 0.6647+[-95, -42]\cdot 10^{-6}i,\\
\lambda_5\in &-0.1421+[-68, -30]\cdot 10^{-6}\, - 0.9552+[-97, -45]\cdot 10^{-6}i.
\end{array}
\]
\end{example}

        %
        %
        %


\begin{thebibliography}{99}

\bibitem{boost}
\newblock Boost C++ Libraries.
\newblock Website:  \href{http://www.boost.org/}{www.boost.org}.

\bibitem{lapack}
\newblock LAPACK -- Linear Algebra PACKage.
\newblock Website: \href{http://www.netlib.org/lapack/}{www.netlib.org/lapack}.

\bibitem{capd}
\newblock The CAPD library.
\newblock Website: \href{http://capd.ii.uj.edu.pl/}{capd.ii.uj.edu.pl}.

\bibitem{CZ}
\newblock M. J. Capi\'nski and P. Zgliczy\'nski. 
\newblock Cone conditions and covering relations for topologically normally hyperbolic invariant manifolds. Discrete and Continuous Dynamical Systems - Series A, 30:641--670, 2011.

\bibitem{qm}
\newblock C. Cohen-Tannoudji. Chapter II. The mathematical tools of quantum mechanics in:
Quantum mechanics. New York : Wiley, 1977.

\bibitem{QQR}
\newblock J.G.F. Francis. The QR transformation I. The Computer Journal, 4(3):265--271, 1961.

\bibitem{G}
   \newblock S. Gerschgorin. {\"{U}}ber die Abgrenzung der Eigenwerte einer Matrix. Bulletin de l'Acad\'{e}mie des Sciences de l'URSS. Classe des sciences math\'{e}matiques et na, 6:749--754, 1931.

\bibitem{google1}
\newblock T. Haveliwala, S. Kamvar, and G. Jeh. An Analytical Comparison of Approaches to Personalizing PageRank. Stanford University Technical Report, 2003.

\bibitem{Hs}
\newblock S. Hruska,
\newblock {\em A Numerical Method for Constructing the Hyperbolic Structure of Comple H\'enon Mappings},
\newblock Found. Comput. Math., 6(4):427--455, 2006.

\bibitem{R1}
\newblock J. Hubbard, D. Schleicher, and S. Sutherland. How to find all roots of complex polynomials by Newtons method. Inventiones Mathematicae, 146:1--33, 2001.

\bibitem{irwin}
\newblock M. C. Irwin. Smooth dynamical systems. Advanced Series in Nonlinear Dynamics, 17. World Scientific Publishing Co., Inc., River Edge, NJ, 2001. xii+259 pp. ISBN: 981-02-4599-8, Reprint of the 1980 original. With a foreword by R. S. MacKay.

\bibitem{pca1}
\newblock I. T. Jolliffe. Principal Component Analysis. Springer-Verlag, page 487, 1986.

\bibitem{QR}
\newblock V. N. Kublanovskaya. On some algorithms for the solution of the complete eigenvalue problem. USSR Computational Mathematics and Mathematical Physics, 1:637--657, 1962.

\bibitem{KT}
\newblock T. Ku\l{}aga and J. Tabor. Hyperbolic dynamics in graph-directed IFS. Journal of Differential Equations, 251(12):3363--3380, 2011.

\bibitem{google2}
   \newblock  A. N. Langvillei and C. D. Meyer,
      \newblock Google's PageRank and Beyond: The Science of Search Engine Rankings. The Mathematical Intelligencer, 30(1):68--69, 2008.
	
\bibitem{R2}
\newblock J.M. McNamee. Numerical methods for roots of polynomials. Elsevier Science, 2007.

\bibitem{Moore}
\newblock Ramon E. Moore, R. Baker Kearfott, and Michael J. Cloud. Introduction to interval analysis. pub-SIAM, 2009.

\bibitem{Com}
\newblock R. B. Morgan.
\newblock Computing interior eigenvalues of large matrices.
\newblock Linear Algebra and its Applications, 154--156:289--309, 1991.

\bibitem{Nh}
\newblock Sh. Newhouse.
\newblock Cone-fields, Domination, and Hyperbolicity.
\newblock Modern Dynamical Systems and Applications, pages 419--433, 2004.

\bibitem{pca}
\newblock K. Pearson.
\newblock On Lines and Planes of Closest Fit to Systems of Points in Space, volume 2 of 6. Philosophical Magazine, 1901.

\bibitem{jacobi}
\newblock Gerard L. G. Sleijpen and Henk A. Van der Vorst. A Jacobi-Davidson Iteration Method for Linear Eigenvalue Problems. SIAM Review, 42(2):267--293, 2000.

\bibitem{Alek}
\newblock R. Sturman, J. M. Ottino, and S. Wiggins. The mathematical foundations of mixing. Cambridge University Press., 2006.

\bibitem{vibration}
\newblock Tustin Wayne. Where to place the control accelerometer. EE-Evaluation Engineering, 10(45):50, 2006.

\bibitem{twarz}
\newblock Xirouhakis Y.,  Votsis G. i  Delopoulos, A., 
\newblock Estimation of 3D Motion and Structure of Human Faces. In Advances in intelligent systems : concepts, tools and applications, 1998.

\bibitem{Z}
\newblock P. Zgliczy\'nski. Covering relations, cone conditions and stable manifold theorem. Journal of Differential Equations, 246:1774--1819, 2009.

\end{thebibliography}
\end{document}